\newtheorem{theorem}{Theorem}[section]
\newtheorem{definition}[theorem]{Definition}
\newtheorem{lemma}[theorem]{Lemma}
\newtheorem{proposition}[theorem]{Proposition}
\newenvironment{proof}[1][]{ \textbf{Proof#1. }}{$\Box$\medskip}
\newcommand{\tpi}{\tilde{\pi}}
\newcommand{\ch}{{\,\vee}}
\newcommand{\gog}{{\mathfrak g}}
\newcommand{\gop}{{\mathfrak p}}
\newcommand{\goh}{{\mathfrak h}}
\newcommand{\gon}{{\mathfrak n}}
\newcommand{\gol}{{\mathfrak l}}
\newcommand{\gob}{{\mathfrak b}}
\newcommand{\ad}{\mathrm{ad}}
\newcommand{\gr}{\mathrm{gr}}
\DeclareMathOperator{\pr}{pr}
\newcommand{\scalarSA}[2]{\langle #1,#2 \rangle_{\gog'}}
\newcommand{\scalarLA}[2]{\langle #1,#2 \rangle_{\gog}}
\newcommand\clplus{\hbox{$\subset${\raise0.3ex\hbox{\kern -0.55em ${\scriptscriptstyle +}$}}\ }}
\newcommand\crplus{\hbox{$\supset${\raise1.05pt\hbox{\kern -0.60em ${\scriptscriptstyle +}$}}\ }}
\newcommand{\mV}{\mathbb V}
\newcommand{\mC}{\mathbb C}
\newcommand{\mZ}{\mathbb Z}
\newcommand{\mN}{\mathbb N}
\newcommand{\lC}{\cal{C}}
\newcommand{\lD}{\cal D}
\newcommand{\lU}{\cal U}
\newcommand{\mW}{\mathbb W}
\newcommand{\pa}{\partial}
\newcommand{\px}[1]{\frac{\partial}{\partial x_{#1}}}
\newcommand{\End}{\operatorname{End}}
\newcommand{\Pol}{\operatorname{Pol}}
\newcommand{\Diff}{\operatorname{Diff}}
\newcommand{\Ind}{\operatorname{Ind}}
\newcommand{\LieGtwo}{\mathrm{Lie~}G_2}
\newcommand{\LieAlgPair}[2]{#1\stackrel{i}{\hookrightarrow} #2}
\begin{document}

\title{The F-method and a branching problem for generalized Verma modules associated to $({\LieGtwo},{so(7)})$}
\author{Todor Milev and Petr Somberg}

\date{}

\maketitle

\begin{abstract} 
The branching problem for a couple of non-compatible Lie algebras and their parabolic
subalgebras applied to generalized Verma modules was recently discussed in \cite{ms}. 
In the present article, we employ the recently developed F-method, \cite{KOSS1}, \cite{KOSS2} to the couple of non-compatible Lie algebras $\mathrm{Lie~}G_2\stackrel{i}{\hookrightarrow}{so(7)}$,
and generalized conformal ${so(7)}$-Verma modules of scalar type.
As a result, we classify the $i(\LieGtwo) \cap \gop$-singular vectors for this class of $so(7)$-modules. 
\end{abstract}
 

{\bf Key words:} Generalized Verma modules, Conformal geometry in dimension $5$, Exceptional Lie 
algebra $\LieGtwo$, F-method, Branching problem.

{\bf MSC classification:} 22E47, 17B10, 13C10.


\section{Introduction and Motivation}

The subject of our article has its motivation in the Lie theory problem of 
branching rules for finite dimensional simple Lie algebras and composition structure of generalized Verma modules, 
and dually in the geometrical problem related to the construction of invariant
differential operators in parabolic invariant theories.

We assume that $\gog, \gog'$ are complex semisimple Lie algebras
and $i:\gog'\hookrightarrow\gog$ is an injective homomorphism. Then
$i(\gog')$ is (complex) reductive in $\gog$ and we can choose Borel subalgebras  $\gob'\subset\gog'$ and $\gob \subset\gog$ such that  $i(\gob')\subset\gob$. Let ${\gop\supset \gob}$ be a parabolic subalgebra of $\gog$.  Let $M^\gog_\gop(\mV_{\lambda})$ be the generalized Verma $\gog$-module induced from the irreducible finite dimensional 
$\gop$-module $\mV_{\lambda}$ with highest weight $\lambda$.
We define the branching problem for $M^\gog_\gop(\mV_{\lambda})$ over 
${\gog'}$ to be the problem of finding all $\gob'$-singular 
vectors in $M^\gog_\gop(\mV_{\lambda})$, that is, the set of 
all vectors annihilated by image of the nilradical of $\gob'$
on which the image of the Cartan subalgebra of $\gob'$ has diagonal action. 

In the recent article \cite{ms}, under certain technical assumptions, 
we proved that $M^\gog_\gop(\mV_{\lambda})$ has (finite or infinite) Jordan-H\"older 
series over ${\gog'}$, and enumerated the $\gob'$-highest weights 
$\mu$ appearing in the series. We also computed the dimension  $m(\mu,\lambda)$ of the vector space of $\gob'$-highest weights of weight $\mu$  as a function of $\mu$ and $\lambda$. Further we gave a procedure for producing explicit formulas for some (but not all) $\gob'$-highest weight vectors.

As an example, we discussed $\LieAlgPair{\LieGtwo} {so(7)}$. Restricting our attention to the parabolic subalgebra $\gop\simeq\gop_{(1,0,0)}$ and the 6 infinite families of highest weights $x\varepsilon_1$, $x\varepsilon_1+\omega_2$,  $x\varepsilon_1+\omega_3$, 
$x\varepsilon_1+2\omega_2$,  
$x\varepsilon_1+\omega_1+\omega_2$,  
$x\varepsilon_1+2\omega_3$ we computed in \cite{ms} all $\gob'$-singular 
vectors with $\gob'$-dominant weights. From the theory of generalized Verma modules we know that, depending on the integrality and dominance of the $\gob$-highest weight, $M^\gog_\gop(\mV_{\lambda})$ has $\gob$-singular (and therefore $\gob'$-singular) vectors other than the highest weight vector. 
Therefore these vectors give additional $\gob'$-singular vectors whose weights are not $\gob'$-dominant (and are not computed in \cite{ms}). 

Fix the pair $\LieAlgPair{\LieGtwo}{ so(7)}$ and fix the parabolic subalgebra to be the parabolic subalgebra $\gop_{(1,0,0)}\subset so(7)$ obtained by crossing out the first (long) root of $so(7)$. Let 
\[
\gop'_{(1,0)}=i^{-1}(\LieGtwo).
\]  
In the present article, for the family of $so(7)$-highest weights of the form $x\varepsilon_1$,  we prove that if $x \in \{-3/2,-1/2,1/2,\dots\}$, the module $M^\gog_{\gop_{(1,0,0)}}(\mV_{x\varepsilon_1})$ has, besides its highest weight vector, exactly one $\gop'_{(1,0)}$-singular vector, and has no $\gop'_{(1,0)}$-singular vectors otherwise. Here we recall that, for an arbitrary parabolic subalgebra $\gop'$, a $\gop'$-singular vector is defined as a vector that is annihilated by all elements of the Levi part of $\gop'$, and therefore has weight that projects to zero onto the Levi part of $\gop'$ (``weight of scalar type''). Our result has a somewhat unusually sounding consequence: the $\gop'_{(1,0)}$-singular vector in  $M^{so(7)}_{\gop_{(1,0,0)}}(\mV_{x\varepsilon_1})$  must automatically be $\gop_{(1,0,0)}$-singular. This fact must necessarily fail to generalize for sufficiently large values of $a,b$ and highest weights of the form $\lambda=x\omega_1+a\omega_2+b\omega_3 $. Indeed, the number of $\gop_{(1,0,0)}$-singular vectors in $M^{so(7)}_{\gop_{(1,0,0)}}(\mV_{\lambda})$ is uniformly bounded, while the number $m(x\omega_1, x\omega_1+ a\omega_2 +b\omega_3)$ grows as a linear function of $a$ and $b$.

We would like to note that our example goes beyond the compatible couples of Lie algebras discussed in \cite{KOSS1}, \cite{KOSS2}.

A geometric motivation for the branching problem can be described as follows.
Let $G, G'$ be the connected and simply connected Lie groups with 
Lie algebras $\gog, {\gog}'$.
Let $P$ be the parabolic subgroup of $G$ with Lie algebra $\gop$, and 
let $L\subset P$ be its Levi factor.
Then there is a well-known
equivalence between invariant differential 
operators acting on induced
representations and homomorphisms of generalized Verma modules,
realized by the natural pairing
\begin{eqnarray}\label{eqInvriantDiffOpActing}
Ind^G_P(\mathbb V_\lambda(L)^*)\times M^\gog_\gop(\mV_{\lambda})\longrightarrow \mC,
\end{eqnarray}
where $\mathbb V_\lambda(L)$ denotes the finite-dimensional irreducible $L$-module, 
$\mathbb V_\lambda(L)^*$ is its dual, 
and $Ind^G_P$ denotes induction from $P$ to $G$. 
As a consequence, the singular vectors
constructed in the article determine invariant differential operators
acting between induced representations of $i({G}')$.
It is quite interesting to construct these invariant differential
operators, in particular their curved extensions as lifts
to homomorphisms of semiholonomic generalized Verma modules.

Our motivation for the particular example
$\LieAlgPair{\LieGtwo}{so(7)}$ comes from a natural problem in conformal
geometry of dimension $5$ (note that $so(7)$ is the complexification of the conformal Lie algebra in dimension $5$), 
see \cite{gw} and references therein.
A geometrical characterization of the reduction of
the structure group with Lie algebra $so(7)$ down to $\LieGtwo$ for a given inducing representation
$\mV_\lambda$ is then given by invariant differential operators acting on sections of the
associated vector bundles, intertwined by actions of $so(7)$ and ${\LieGtwo}$.

The structure of the article is as follows. In Section $2$
we recall basic conventions on $so(7), \LieGtwo, i(\LieGtwo)$ and the structure of their parabolic 
subalgebras relative to the embedding $i$. In Section 3, we use \eqref{eqInvriantDiffOpActing} to transform the problem 
of finding differential invariants for
$(so(7),\LieGtwo),\mV_\lambda$ into an algebraic question
about homomorphisms between generalized Verma modules, corresponding
to solutions of the branching problem. In Section 4 we fix the conformal parabolic subalgebra to be  $\gop_{(1,0,0)}\subset so(7)$. Therefore by Lemma \ref{leParabolicsG2inParabolicsB3} the subalgebra $\gop'$ is given by $i(\gop')=i(\gog')\cap\gop$ and therefore equals the subalgebra $\gop'_{(1,0)}$ obtained by crossing out the first root of $\LieGtwo$. We note that $\gop'_{(1,0)}$ is not compatible $(\gog,\gop)$. We further fix the highest weight to be $\lambda\varepsilon_1$ (here we use $\lambda$ as a scalar). We then 
apply the distribution Fourier transform (the ``F-method'') 
developed in \cite{KOSS1}, \cite{KOSS2} to obtain our main result Theorem \ref{thMain}.


\section{Branching problem and (non-compatible) parabolic subalgebras 
for the pair $\LieAlgPair{\LieGtwo}{so(7)}$}
\label{secso7G2LieAlgstructure}\label{secG2inB3def}

In the present section we introduce the Lie theoretic conventions for the complex Lie algebra 
$so(7)$, exceptional Lie algebra $\LieGtwo$, and Levi resp. parabolic subalgebras $\gop$ of 
$so(7)$ relative to parabolic subalgebras $i(\gop')$ of $i(\LieGtwo)$. These will 
be used in the subsequent Section $3$, where we employ the F-method. For more detailed
review, cf. \cite{ms}.

We start by fixing a Chevalley-Weyl basis of the Lie algebra $so(2n+1)$. 
Let the defining vector space $V$ of $so(2n+1)$ have a basis 
$e_{1},\dots e_{n}, e_0, e_{-1}, \dots e_{-n}$, where the defining symmetric bilinear 
form $B$ of $so(2n+1)$ is given by $ B(e_i, e_j):=0, i\neq -j$, 
$ B(e_i,e_{-i}):=1$, $B(e_i,e_0):=0$, $B(e_0,e_0):=1$, or alternatively defined as an 
element of $S^2(V^*)$,
\begin{equation}\label{eqDefiningSymmetricBilinearFormTypeB}
B:=\sum_{i=-n}^{n} e_i^*\otimes e_{-i}^*=(e_0^*)^2+ 2\sum_{i=1}^{n} e_i^* e_{-i}^*,
\end{equation}
under the identification $v^*w^*:=\frac {1}{2!} \left(v^*\otimes w^*+w^*\otimes v^*\right)$.

In the basis $e_{1},\dots e_{n}, e_0, e_{-1}, \dots e_{-n}$, the 
matrices of the elements of $so(2n+1)$ are of the form
\[
\left(\begin{array}{c|c|c}A&\begin{array}{c}v_1\\ \vdots \\ v_n\end{array} &C=-C^T 
\\\hline \begin{array}{ccc}w_1 &\dots&  w_n\end{array} &0& \begin{array}{ccc}-v_1 
&\dots&  -v_n\end{array} \\\hline D=-D^T&\begin{array}{c}-w_1\\ \vdots \\ -w_n\end{array} 
& -A^T\end{array}\right), \]
i.e., all matrices $\mathbf C$ such that $\mathbf A^t\mathbf B+\mathbf B\mathbf A=0$. 
We fix $e_{1}^*,\dots e_{n}^*, e_0^*, e_{-1}^*, \dots e_{-n}^*$ to be basis of $V^*$ 
dual to $e_{1},\dots e_{n}, e_0, e_{-1}, \dots e_{-n}$. We identify elements of $End(V)$ 
with elements of $V\otimes V^*$.
 In turn, we identify elements of $End(V)$ with their matrices in the basis 
 $e_{1},\dots, e_{n}$, $e_0$, $e_{-1}, \dots, e_{-n}$.

Fix the Cartan subalgebra $\goh$ of $so(2n+1)$ to be the subalgebra of
diagonal matrices, i.e., the subalgebra spanned by the vectors 
$e_i\otimes e_i^*-e_{-i}\otimes e_{-i}^* $. 
Then the basis vectors $e_{1},\dots e_{n}, e_0, e_{-1}, \dots e_{-n}$ are a basis for 
the $\goh$-weight vector decomposition of $V$. Let the $\goh$-weight of $e_i, i>0$, be 
$\varepsilon_i$. Then the $\goh$-weight of $e_{-i}, i>0$ is $-\varepsilon_i$, and an 
$\goh$-weight decomposition of $so(2n+1)$ is given by the elements 
$g_{\varepsilon_i-\varepsilon_j}:= e_i\otimes e_j^*-e_{-j}\otimes e_{-i}^*$, 
$g_{\pm(\varepsilon_i+\varepsilon_j)}:=e_{\pm i}\otimes e_{\mp j}^*-e_{\pm j}\otimes e_{\mp i}^*$ 
and $g_{\pm \varepsilon_i}:=\sqrt{2}\left(e_{\pm i}\otimes e_0^*-e_0\otimes e_{\mp i}^*\right)$, 
where $i,j>0$.

Define the symmetric bilinear form $\scalarLA{\bullet}{\bullet}$ on $\goh^*$ by 
$\scalarLA{\varepsilon_i}{\varepsilon_j}=1$ if $i=j$ and zero otherwise. 

The root system of $so(2n+1)$ with respect to $\goh$ is given by $\Delta(\gog):=\Delta^+(\gog)\cup\Delta^-(\gog)$, 
where we define 
\begin{equation}\label{eqPosRootSystemB3}
\Delta^+(\gog):=\{\varepsilon_i\pm\varepsilon_j|1\leq i< j\leq n\}\cup\{\varepsilon_i|1\leq i\leq n\}
\end{equation}
and $\Delta^-(\gog):=-\Delta^{+}(\gog)$. We fix the Borel subalgebra $\gob$ of $so(2n+1)$ to be the 
subalgebra spanned by $\goh$ and the elements $g_{\alpha}, \alpha\in \Delta^+(\gog)$. The simple positive 
roots corresponding to $\gob$ are then given by 
\[
\eta_1:=\varepsilon_1-\varepsilon_2,\dots,  \eta_{n-1}:=\varepsilon_{n-1}-\varepsilon_{n},\eta_{n}:=\varepsilon_{n}\quad .
\]

For the remainder of this Section we fix the odd orthogonal Lie algebra to be $so(7)$. We order the 18 roots of $so(7)$ in graded lexicographic order with respect to their simple basis coordinates. We then label the negative roots by the indices $-9, \dots, -1$ and the positive roots by the indices $1,\dots, 9 $. Finally, we abbreviate the Chevalley-Weyl generator $g_{\alpha}\in so(7)$ by $g_i$, where $i$ is the label of the corresponding root. For example, $g_{\pm 1}=g_{\pm(\varepsilon_1-\varepsilon_2)}$, $g_{\pm 2}=g_{\pm (\varepsilon_2-\varepsilon_3)}$, $g_{\pm 3}=g_{\pm (\varepsilon_3)}$ are the simple positive and negative generators,  the element $g_{-9}=g_{-\varepsilon_1-\varepsilon_2}$ is the Chevalley-Weyl generator corresponding to the lowest root, and so on. We furthermore set  $h_1:=[g_1,g_{-1}]$, $h_2:=[g_2,g_{-2}]$, $h_3:=1/2[g_{3}, g_{-3}]$. 

Let now $\gog'= \LieGtwo$. One way of defining the positive 
root system of $\LieGtwo$ is by setting it to be the set of vectors
\begin{equation}\label{eqPosRootSystemG2}
\Delta( \gog'):=\{\pm(1, 0), \pm(0, 1), \pm(1, 1), \pm(1, 2), \pm(1, 3), \pm(2, 3)\}.
\end{equation}
We set $\alpha_1:=(1,0)$ and $\alpha_2:=(0,1)$. We fix a bilinear form $\scalarSA{\bullet}{\bullet}$ on $\goh'$, proportional
to the one induced by Killing form by setting 
\begin{eqnarray}
\left(\begin{array}{cc}\scalarSA{\alpha_1}{\alpha_1} & \scalarSA{\alpha_1}{\alpha_2} \\ 
\scalarSA{\alpha_2}{\alpha_1} & \scalarSA{\alpha_2}{\alpha_2}\\\end{array}\right):= 
\left(\begin{array}{cc}2 & -3\\ -3 & 6\\\end{array}\right).
\end{eqnarray}
In an  $\scalarSA{\bullet}{\bullet}$-orthogonal basis the root system of $\LieGtwo$ is drawn in Figure \ref{figRootSystemG2}.
\begin{figure}
\begin{center}
\begin{pspicture}(-1,1)(1,-1)
\psline(0,0)(0.5,0.87)
\psline(0,0)(-0.5,0.87)
\psline(0,0)(0.5,-0.87)
\psline(0,0)(-0.5,-0.87)
\psline(0,0)(1,0)
\psline(0,0)(-1,0)
\psline(0,0)(0.57,0.32) 
\psline(0,0)(-0.57,0.32)
\psline(0,0)(0.57,-0.32)
\psline(0,0)(-0.57,-0.32)
\psline(0,0)(0,0.67)
\psline(0,0)(0,-0.67)
\rput(1,0.1){\tiny{$\alpha_2$}}
\rput(-0.62,0.38){\tiny{$\alpha_1$}}
\end{pspicture}
\end{center}
\caption{\label{figRootSystemG2}The root system of $\LieGtwo$}
\end{figure} 

Similarly to the $so(7)$ case, we order the 12 roots of $\LieGtwo$ in the graded lexicographic order with respect to their simple basis coordinates, and label the roots with the indices $-6,\dots, -1$, $1, \dots, 6$. We fix a basis for the Lie algebra $\LieGtwo$ by giving a 
set of Chevalley-Weyl generators  $ g'_{i}$, $i\in \{\pm 1, \dots \pm 6\}$, 
and by setting $h'_{1}:=[ g'_1, g'_{-1}]$, $ h'_{2}:=3[ g'_2, g'_{-2}]$. Just as in the $so(7)$ case, we ask that the generator $g'_{\pm i} $ correspond to the root space labeled by $\pm i$.

All embeddings $\LieAlgPair{\LieGtwo}{so(7)}$ are conjugate over $\mathbb C$. 
One such embedding is given via 
\[
i({g}'_{\pm 2}):= g_{\pm 2}, \quad i(g'_{\pm 1}):= g_{\pm 1}+g_{\pm 3}\quad.
\] 
As $g'_{\pm 1}, g'_{\pm 2}$ generate $\LieGtwo$, 
the preceding data determines the map $i$ 
and one can directly check it is a Lie algebra homomorphism. 
Alternatively, we can use $i({g}'_{\pm 1}), i({g}'_{\pm 2})$ to
generate a Lie subalgebra of $so(7)$, 
verify that this subalgebra is indeed 
14-dimensional  and simple, and finally use this 14-dimensional image 
to compute the structure constants of $\LieGtwo$.

We denote by $\omega_1:=\varepsilon_1$, $\omega_2:=\varepsilon_1+\varepsilon_2$ and  $\omega_3:=\frac12(\varepsilon_1+\varepsilon_2+\varepsilon_3)$ 
the fundamental weights of  $so(7)$ and by $\psi_1:=2\alpha_1+\alpha_2$, $\psi_2:=3\alpha_1+2\alpha_2$ 
the fundamental weights of $\LieGtwo$.

Let $\pr: \goh^*\to {\goh'}^*$ be the map naturally induced by $i$. Then
\begin{equation}\label{eqProjectionCartanB_3toCartanG_2}
\pr(\underbrace{\varepsilon_1-\varepsilon_2}_{\eta_1})=\pr(\underbrace{\varepsilon_3}_{\eta_3})=\alpha_1, \quad \pr(\underbrace{\varepsilon_2-\varepsilon_3}_{\eta_2})=\alpha_2 ,
\end{equation}
or equivalently
\begin{equation*}
\pr(\omega_1)=\pr(\omega_3)=\psi_1, \quad \pr(\omega_2)=\psi_2.
\end{equation*}
Conversely, $\iota: {\goh'}^* \to \goh^*$ is the map 
\begin{equation}\label{eqG2rootSystemEmbeddingInB3}
\iota(\alpha_2)=3\eta_2= 3\varepsilon_2-3\varepsilon_3, \quad \iota(\alpha_1)=\eta_1+2\eta_3= \varepsilon_1-\varepsilon_2+2\varepsilon_3.
\end{equation}
According to the usual convention, to an arbitrary subset of the simple positive roots of $so(7)$ (``crossed-out'' roots) we assign a parabolic subalgebra by requesting that the crossed out root spaces lie outside of the Levi part of $\gop$.  In turn, we parametrize the subsets of the simple positive roots of $so(7)$ by triples of $0$'s and $1$'s with $1$ standing for ``crossed-out'' root. Finally, we index the parabolic subalgebra by the corresponding triples of $0$'s and $1$'s. For example, by $\gop_{(1,1,0)}$ we denote the parabolic subalgebra of $so(7)$ whose Levi part has roots $\pm \varepsilon_3$. Define the four parabolic subalgebra $\gob'\simeq \gop'_{(1,1)},\gop'_{(1,0)},\gop'_{(0,1)}, \gop'_{(0,0)} \simeq \LieGtwo$ of $\LieGtwo$ in analogous fashion.

We recall from \cite{ms} that the pairwise inclusions between the 
parabolic subalgebras of $so(7)$ and the embeddings of the parabolic 
subalgebras of $\LieGtwo$ are given as follows.

\newcommand{\BThreeDiagram}[3]{
\gop_{(#1,#2,#3)}
}

\newcommand{\GTwoDiagram}[2]{
{\gop}'_{(#1,#2)}
}
\begin{lemma}\label{leParabolicsG2inParabolicsB3}
For the pair $G_2\stackrel{i}{\hookrightarrow} so(7)$, 
let $\goh, \gob,\gop$ denote Cartan, 
Borel and parabolic subalgebras of $so(7)$ and $\goh', {\gob}', {\gop}'$ denote Cartan, 
Borel and parabolic subalgebras of $\LieGtwo$
with the assumptions that 
$i(\goh')\subset\goh\subset \gob$, $i({\gob}')\subset \gob\subset\gop$, 
${\gob}'\subset{\gop}'$.
Then we have the following inclusion diagram for the possible values of $\gop, \gop'$.
\[
\xymatrix{
&{\BThreeDiagram{0}{0}{0}}\simeq so(7) & &  \\
{\BThreeDiagram{1}{0}{0}} \ar[ur] &{\BThreeDiagram{0}{1}{0}}  \ar[u] &{\BThreeDiagram{0}{0}{1}} \ar[ul] &
{\GTwoDiagram{0}{0}} \simeq\LieGtwo \ar[ull] \\
{\BThreeDiagram{1}{1}{0}} \ar[u]  \ar[ur] &{\BThreeDiagram{1}{0}{1}}  \ar[ul] \ar[ur]  &{\BThreeDiagram{0}{1}{1}} \ar[ul]  \ar[u] &
& {\GTwoDiagram{0}{1}}  \ar[ul] \ar[ulll]  \\
&{\BThreeDiagram{1}{1}{1}}\simeq \gob \ar[u]  \ar[ur]  \ar[ul]  &  &
{\GTwoDiagram{1}{0}} \ar[ull] \ar[uu]  \\
&&& {\GTwoDiagram{1}{1}}\simeq \gob' \ar[u] \ar[ull]  \ar[uur]\\
}
\]
The arrows in the diagram indicate 
the inclusions between the corresponding parabolic subalgebras. 
In addition, if an arrow  is drawn between 
the parabolic subalgebra $\gop'$ of 
$\LieGtwo$ and a parabolic subalgebra $\gop$ of $so(7)$, 
then $\gop'= i^{-1}(i({\gog}')\cap\gop)$.
\end{lemma}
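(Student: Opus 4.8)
The plan is to describe every standard parabolic as the non-negative part of a grading and to observe that $i$ intertwines these gradings, so that the entire statement collapses to one short computation with $\pr$. For $I\subseteq\{\eta_1,\eta_2,\eta_3\}$ let $\gop_I$ be the standard parabolic of $so(7)$ obtained by crossing out $I$, and let $\zeta_I\in\goh$ be the dominant integral coweight with $\eta_j(\zeta_I)=1$ for $\eta_j\in I$ and $\eta_j(\zeta_I)=0$ otherwise. A standard fact is that $\gop_I$ is exactly the sum of the non-negative $\ad(\zeta_I)$-eigenspaces, its Levi being the $0$-eigenspace; in particular $\goh\subset\gob\subset\gop_I$ holds automatically. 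Define $\zeta'_J\in\goh'$ and $\gop'_J$ for $J\subseteq\{\alpha_1,\alpha_2\}$ in the same way. This turns membership in a parabolic into a sign condition on grades.

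The key step is to evaluate $i(\zeta'_J)\in i(\goh')\subseteq\goh$. Since $i$ on the Cartan subalgebras is dual to $\pr$, for every simple root $\eta_j$ of $so(7)$ we have $\eta_j\big(i(\zeta'_J)\big)=\pr(\eta_j)(\zeta'_J)$, and by \eqref{eqProjectionCartanB_3toCartanG_2} the images $\pr(\eta_1)=\pr(\eta_3)=\alpha_1$ and $\pr(\eta_2)=\alpha_2$ are themselves simple roots of $\gog'$. Hence $\eta_j(i(\zeta'_J))$ equals $1$ or $0$ according as $\pr(\eta_j)\in J$ or not, so $i(\zeta'_J)$ is again dominant and integral, and its non-negative eigenspaces in $so(7)$ form the standard parabolic $\gop_I$ with $I=\{\eta_j:\pr(\eta_j)\in J\}$. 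Running over the four values of $J$ gives $\emptyset\mapsto\gop_{(0,0,0)}$, $\{\alpha_2\}\mapsto\gop_{(0,1,0)}$, $\{\alpha_1\}\mapsto\gop_{(1,0,1)}$ and $\{\alpha_1,\alpha_2\}\mapsto\gop_{(1,1,1)}$, which are precisely the four cross-arrows of the diagram.

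The identity then follows formally from equivariance. Decomposing $X\in\gog'$ into $\ad(\zeta'_J)$-eigenspaces $X=\sum_\lambda X_\lambda$ and using $\ad(i(\zeta'_J))\,i(X_\lambda)=i\big(\ad(\zeta'_J)X_\lambda\big)=\lambda\,i(X_\lambda)$, the injective map $i$ sends each grade-$\lambda$ piece to a grade-$\lambda$ piece; hence $i(X)$ lies in the non-negative part $\gop_I$ iff $X$ lies in the non-negative part $\gop'_J$. This is exactly $i^{-1}\big(i(\gog')\cap\gop_I\big)=\gop'_J$, and applying $i$ yields $i(\gop'_J)=i(\gog')\cap\gop_I\subseteq\gop_I$, i.e. the drawn cross-inclusions. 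The remaining arrows are the purely internal inclusions, for which $\gop_{I_1}\subseteq\gop_{I_2}$ iff $I_2\subseteq I_1$ (and likewise inside $\gog'$) is standard; completeness of the Hasse diagram reduces to the remark that the cross-arrow from each $\gop'_J$ lands on the smallest $so(7)$-parabolic containing $i(\gop'_J)$, namely the $\gop_I$ just computed, every other cross-inclusion arising by transitivity.

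The hard part, I expect, is exactly the feature that makes the pair non-compatible: the fibre $\pr^{-1}(\alpha_1)=\{\eta_1,\eta_3\}$ has two elements. Concretely $g'_{\pm\alpha_1}$ maps to the genuinely diagonal element $g_{\pm1}+g_{\pm3}$, so that $i(g'_{\pm\alpha_1})\in\gop_I$ forces both its $\eta_1$- and $\eta_3$-components into $\gop_I$ simultaneously; this is why crossing out $\alpha_1$ corresponds to crossing out the pair $\{\eta_1,\eta_3\}$, producing $\gop_{(1,0,1)}$ rather than $\gop_{(1,0,0)}$. The grading formulation is what lets one avoid a case-by-case analysis of root strings and nonvanishing of structure constants: all of that information is packaged into the single equality $\eta_j(i(\zeta'_J))=\pr(\eta_j)(\zeta'_J)$.
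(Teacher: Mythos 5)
Your argument is correct, but there is nothing in this paper to compare it against: the lemma is stated here without proof and is simply recalled from the companion paper \cite{ms}, so your write-up is in effect a self-contained replacement for an omitted argument. The route you take is the efficient one. Encoding each standard parabolic as the non-negative part of $\ad$ of a coweight $\zeta_I$, and then using that $\pr$ is by definition the dual of $i|_{\goh'}$ so that $\eta_j\bigl(i(\zeta'_J)\bigr)=\pr(\eta_j)(\zeta'_J)$, reduces the whole lemma to reading off \eqref{eqProjectionCartanB_3toCartanG_2}; the fibre $\pr^{-1}(\alpha_1)=\{\eta_1,\eta_3\}$ is exactly what produces $\gop_{(1,0,1)}$ from $\gop'_{(1,0)}$, and your eigenspace-decomposition step correctly upgrades the inclusion $i(\gop'_J)\subseteq\gop_I$ to the equality $\gop'_J=i^{-1}\bigl(i(\gog')\cap\gop_I\bigr)$, since $i$ carries the $\ad(\zeta'_J)$-grading of $\gog'$ into the $\ad(\zeta_I)$-grading of $\gog$ and is injective on each graded piece. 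Two small remarks. First, your closing claim that every other cross-inclusion arises by transitivity is a statement about which arrows the authors chose to draw, not part of what must be proved; the lemma only asserts the identity for the arrows actually drawn, and indeed the identity $\gop'=i^{-1}(i(\gog')\cap\gop)$ also holds for some undrawn pairs (e.g.\ $\gob'$ and $\gop_{(1,1,0)}$), which is consistent with the one-directional phrasing of the statement. Second, your approach silently uses that $i(\gob')\subseteq\gob$, i.e.\ that $i$ sends positive root vectors to sums of positive root vectors, which is part of the hypotheses and is visible from $i(g'_{\pm1})=g_{\pm1}+g_{\pm3}$, $i(g'_{\pm2})=g_{\pm2}$; it is worth saying explicitly since it is what guarantees $\zeta'_J$ and $i(\zeta'_J)$ are simultaneously dominant for the chosen Borel subalgebras.
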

The structure of $so(7)$ as a module over the Levi part of parabolic subalgebras of $\LieGtwo$ is described in detail in \cite[Lemma 5.2]{ms} (the lemma is too large to recall here) and we will implicitly use it throughout Section \ref{secG2SingularVectorsComputedWithInvariants}.

Note that the conformal parabolic subalgebra
$\gop_{(1,0,0)}\subset so(7)$ and the parabolic subalgebra $\gop'_{(1,0)}\subset \LieGtwo$ are not compatible.



\section{Branching problem and the F-method (algebraic distribution Fourier transformation)}

In the present section we briefly review the F-method developed in \cite{KOSS1}, \cite{KOSS2}. It is based on the analytical tool 
of algebraic Fourier transformation on the commutative nilradical $\gon$
of $\gop$, which allows to find singular vectors in generalized Verma 
modules exploiting the algebraic Fourier transform and classical invariant theory.
The method converts a problem in the universal enveloping algebra of a Lie algebra into a system 
of partial or ordinary special differential equations acting on a polynomial ring. In examples known to us, the conversion 
to partial differential equations yields a lot more tractable problem than the starting universal enveloping algebra one.

Let $\tilde{G}$ be a connected real reductive Lie group
 with the Lie algebra $\tilde\gog$, $\tilde P\subset \tilde G$ 
 a parabolic subgroup and $\tilde\gop$ its Lie algebra, 
 $\tilde\gop=\tilde\gol\oplus\tilde\gon$
the Levi decomposition of $\tilde\gop$ and $\tilde\gon_-$ its opposite nilradical,
$\tilde\gog=\tilde\gon_-\oplus\tilde\gop$. The corresponding Lie 
groups are denoted $\tilde N_-,\tilde L,\tilde N$. 
Let $p$ denote the fibration $p:\tilde G\to \tilde G/\tilde P$ and let
$\tilde M:=p(\tilde N_-\cdot\tilde P)$ denote the big Schubert cell of $\tilde G/\tilde P$. Then the exponential map
$$
\tilde\gon_-\to \tilde M, 
\quad
X\mapsto\exp(X)\cdot o\in \tilde G/\tilde P,\, o:= e\cdot \tilde P\in \tilde G/\tilde P, \, e\in\tilde G.
$$
 gives the canonical identification of the vector space $\gon_-$ with $\tilde M$.
 
Given a complex finite dimensional $\tilde P$-module $\mV$ (in the present section we do not indicate explicitly its highest weight), let ${Ind}_{\tilde P}^{\tilde G}(\mV)$ denote the space 
of smooth sections of the homogeneous vector bundle
$\tilde G \times_{\tilde P} \mV \to \tilde G/\tilde P$, i.e.,
$$
{Ind}_{\tilde P}^{\tilde G}(\mV)=C^\infty(\tilde G,\mV)^{\tilde P}:=\{f\in C^\infty(\tilde G,\mV)|
f(g\cdot p)=p^{-1}\cdot f(g),\, g\in \tilde G,p\in \tilde P\}.
$$
Let $\tilde\pi$ denote the induced representation of $\tilde G$ on  ${Ind}_{\tilde P}^{\tilde G}(\mV)$.

Let ${\fam2 U}(\tilde\gog_\mC)$ denote the universal enveloping algebra
of the complexified Lie algebra $\tilde{\gog}_\mC$.
Let $\mV^\ch$ be the dual (contragredient) representation to $\mV$.
The generalized Verma module ${M}^{\tilde\gog}_{\tilde\gop}(\mV^\ch)$ is defined by
$$
{M}^{\tilde\gog}_{\tilde\gop}(\mV^\ch):= 
{\fam2 U}(\tilde\gog)\otimes_{{\fam2 U}(\tilde\gop)}\mV^\ch,  
$$
and there is a $(\tilde{\mathfrak {g}}, \tilde P)$-invariant natural
pairing between 
 $\Ind_{\tilde P}^{\tilde G}(\mV)$  and  ${M}^{\tilde\gog}_{\tilde\gop}(\mV^\ch)$,  
described as follows. Let ${\lD}'(\tilde G/\tilde P)\otimes \mV^\ch$ be 
the space 
of all distributions on $\tilde G/\tilde P$ with values in $\mV^\ch$.
The evaluation defines a canonical equivariant pairing between $\Ind^{\tilde G}_{\tilde P}(\mV)$ and
 ${\lD'}(\tilde G/\tilde P)\otimes \mV^\ch$, and this restricts to the pairing
\begin{equation}\label{distrduality}
\Ind^{\tilde G}_{\tilde P}(\mV)\times {\lD}_{[o]}'(\tilde G/\tilde P)\otimes \mV^\ch\to \mC,
\end{equation}
where $ {\lD}'(\tilde G/\tilde P)_{[o]}\otimes \mV^\ch$ denotes the space of distributions
supported at the base point $o\in \tilde G/\tilde P.$
As shown in \cite{cs}, the space 
${\lD}_{[o]}'(\tilde G/\tilde P)\otimes \mV^\ch$ can be identified,
as an ${\fam2 U}(\tilde\gog)$-module, 
with the generalized Verma module ${M}^{\tilde\gog}_{\tilde\gop}(\mV^\ch).$

Moreover, given two inducing representations $\mV$ and $\mV'$ of $\tilde P$, the space of $\tilde G$-equivariant differential operators
from $\Ind_{\tilde P}^{\tilde G}(\mV)$ to  
$\Ind_{\tilde P}^{\tilde G}(\mV')$ is isomorphic to
 the space of $(\tilde\gog,\tilde P)$-homomorphisms
between ${M}^{\tilde\gog}_{\tilde\gop}(\mV'^\ch)$ and ${M}^{\tilde\gog}_{\tilde\gop}(\mV^\ch)$.  
The homomorphisms of generalized Verma modules are determined by their singular vectors,
and the F-method translates the problem of finding singular vectors to the study of distributions on 
$\tilde G/\tilde P$ supported at the origin, and consequently to the problem of finding the solution space
for a system of partial differential equations acting on polynomials $\Pol(\tilde\gon)$ on $\tilde{\mathfrak {n}}$.  

The representation $\tilde\pi$ of $\tilde G$ on $\Ind_{\tilde P}^{\tilde G}(\mV)$ has the infinitesimal 
representation $d\tilde\pi$ of $\tilde\gog_\mC$. In the non-compact case, $\tilde \pi$ acts on functions on the big Schubert cell $\tilde\gon_-\simeq \tilde M\subset \tilde G/\tilde P$
with values in $\mV$. The latter representation space can be identified via the exponential 
map with $C^\infty(\tilde\gon_-,\mV)$. The action $d\tilde\pi(Z)$
of elements $Z\in \tilde\gon$
 on $\lC^\infty(\tilde\gon_-,V)$ is realized by vector fields on 
 $\tilde{\mathfrak {n}}_-$ with coefficients in $\Pol(\tilde\gon_-)\otimes\End \mV$, see \cite{kos}.

By the Poincar\'e-Birkhoff-Witt theorem, 
the generalized Verma module ${M}^{\tilde\gog}_{\tilde\gop}(\mV^\ch)$ is isomorphic
 to ${\fam2 U}(\tilde\gon_-)\otimes \mV^\ch\simeq\Diff_{\tilde N_-} (\tilde\gon_-)\otimes \mV^\ch$ 
 as an $\tilde{\mathfrak {l}}$-module.  
In the special case when $\tilde\gon_-$ is commutative, 
$\Diff_{\tilde N_-} (\tilde\gon_-)$ is the space of holomorphic differential 
operators on $\tilde\gon_-$ with constant coefficients regarded as a 
subspace of the Weyl algebra 
$\Diff (\tilde\gon_-)$ of algebraic differential operators on $\tilde\gon_-$.
Moreover, the operators $d\tilde\pi^{\ch}(X),\,X\in\tilde\gog$,
are realized as differential operators
on $\tilde\gon_-$ with coefficients in $\End(\mV^\ch).$
The application of Fourier transform
on $\tilde\gon_-$ gives the identification of the generalized Verma module  
$\Diff_{\tilde N_-} (\tilde\gon_-)\otimes \mV^\ch$
with the space 
$\Pol(\tilde\gon)\otimes \mV^\ch$,
and the action $d\tilde\pi^{\ch}$ of $\tilde\gog$
on $\Diff_{\tilde N_-} (\tilde\gon_-)\otimes \mV^\ch$
translates to the action $(d\tpi^\ch)^F$ of $\tilde\gog$
on $\Pol(\tilde\gon)\otimes \mV^\ch$ and is realized again by differential operators
with values in $\End(\mV^\ch)$. The explicit form of $(d\tpi^\ch)^F(X)$
is easy to compute by Fourier transform from the explicit form of $d\tilde\pi^{\ch}.$
 
The previous framework can be applied to any pair of couples $\tilde P\subset \tilde G$ and $\tilde P'\subset \tilde G'$ of Lie groups for which $\tilde G'\subset \tilde G$ is a reductive subgroup of $\tilde G$ and 
$\tilde P'=\tilde P \cap \tilde G'$ is a parabolic subgroup of $\tilde G'$. The Lie algebras
of $\tilde G',\tilde P'$ are denoted by $\tilde{\mathfrak {g}}',\tilde{\mathfrak {p}}'$. 
In this case, 
$\tilde {\mathfrak {n}}':=\tilde{\mathfrak {n}} \cap \tilde{\mathfrak {g}}'$
is the nilradical 
of $\tilde{\mathfrak {p}}'$, 
and $\tilde L'=\tilde L \cap \tilde G'$
is the Levi subgroup of $\tilde P'$.  
We are interested in the branching problem for generalized Verma modules 
${M}^{\tilde\gog}_{\tilde\gop}(\mV^\ch)$
over, $\tilde{\mathfrak {g}}$, i.e., in the structure of the restriction of ${M}^{\tilde\gog}_{\tilde\gop}(\mV^\ch)$ to $\tilde{\mathfrak {g}}'$. 

\begin{definition}
Let $\mV$ be an irreducible $\tilde P$-module.
Define the $\tilde L'$-module
\begin{eqnarray}
{M}_{\tilde \gop}^{\tilde \gog}(\mV^\ch)^{\tilde\gon'}
 :=
 \{v\in {M}^{\tilde\gog}_{\tilde\gop}(\mV^\ch)|\, d\pi^\ch (Z)v=0
\textrm{  for all } Z\in \tilde\gon'\}.
\end{eqnarray}
\end{definition}
The set 
${M}_{\tilde\gop}^{\tilde\gog}(\mV^\ch)^{\tilde\gon'}$ is a completely reducible  $\tilde{\mathfrak {l}}'$-module. Note that for $\tilde G=\tilde G'$, ${M}_{\tilde\gop}^{\tilde\gog} (\mV^\ch)^{ \tilde\gon'}$ is necessarily finite-dimensional. However for $\tilde G\not = \tilde G'$, the set ${M}_{\tilde \gop}^{\tilde\gog}(V^\ch)^{\gon'}$ will in general (but not necessarily, as illustrated in the next section) be infinite dimensional. An irreducible $\tilde L'$-submodule $\mW^{\ch}$ of 
${M}_{\tilde\gop}^{\tilde\gog}(\mV^\ch)^{\tilde\gon'}$ gives
an injective ${\fam2 U}(\tilde\gog')$-homomorphism from ${M}_{\tilde\gop'}^{\tilde\gog'}(\mW^\ch)$
to ${M}_{\tilde\gop}^{\tilde\gog}(\mV^\ch)$. Dually, 
we get an equivariant differential operator acting from
$\Ind_{\tilde P}^{\tilde G}(\mV)$ to $\Ind_{\tilde P'}^{\tilde G'}(\mW)$.  

Using the F-method, the space of $\tilde L'$-singular vectors 
${M}_{\tilde\gop}^{\tilde\gog}(\mV^\ch)^{\tilde\gon'}$ is 
realized in the ring of polynomials on 
$\tilde\gon$ valued in $\mV^\ch$ and equipped with the action of the Lie algebra via $(d\tpi^\ch)^F.$

\begin{definition}  
We define
\begin{eqnarray}
\label{eqn:sol}
Sol(\tilde{\gog},\tilde{\gog}',\mV^{\ch}) 
:=\{f \in \Pol(\tilde{\gon})\otimes \mV^\ch |\,
(d\tpi^\ch)^F(Z) f = 0 \textrm{ for all } Z\in\tilde{\gon}' 
\}.
\end{eqnarray}
\end{definition}

Then the inverse Fourier transform gives an $\tilde L'$-isomorphism
\begin{equation}
\label{eqn:phi}
Sol(\tilde{\gog},\tilde{\gog}';\mV^\ch) \overset \sim \to
{M}_{\tilde{\gop}}^{\tilde{\gog}}(\mV^{\ch})^{\tilde{\gon}'}.  
\end{equation}
An explicit form
of the action $(d\tpi^\ch)^F(Z)$ leads to 
a system  of differential equation for elements in $ \mbox{\rm Sol}$.
The transition from ${M}_{\tilde\gop}^{\tilde\gog}(\mV^\ch)^{\tilde\gon'}$ to $ \mbox{\rm Sol}$ 
transforms the problem of computation of singular 
vectors in generalized Verma modules into a system of partial differential equations.

In the dual language of differential operators acting on 
principal series representation, the set of $\tilde G'$-intertwining differential operators from
$\Ind_{\tilde P}^{\tilde G}(\mV)$ to $\Ind_{\tilde P'}^{\tilde G'}(\mV')$ is in bijective
correspondence with the space of all $(\tilde\gog',\tilde P')$-homomorphisms
from  ${M}^{\tilde\gog'}_{\tilde\gop'}({\mV'}^\ch)$ to ${M}^{\tilde\gog}_{\tilde\gop}(\mV^\ch).$


\section{$\LieGtwo\cap\gop' $-singular vectors in the $so(7)$-generalized Verma modules of scalar type for the conformal parabolic subalgebra}\label{secG2SingularVectorsComputedWithInvariants}

In this subsection we determine the $i(\LieGtwo)\cap \gop$-singular vectors 
in the family of $\tilde\gog=so(7)$ generalized Verma modules $M^{so(7)}_{\gop_{(1,0,0)}}(\mC_\lambda)$ induced 
from character $\chi_\lambda :\tilde\gop\to\mC$ of the weight $\lambda\varepsilon_1$ ($\varepsilon_1$ is the 
first fundamental weight of 
$so(7)$). In this way, the results computed in the present section are analytic 
counterpart realized by F-method of the algebraic results developed in \cite{ms}.

Denote by $v_\lambda$ the highest weight vector of the generalized Verma $so(7)$-module 
$M^{so(7)}_{\gop_{(1,0,0)}}(\mV_\lambda)$. 
Note that as  $i(h_2')=3h_2=3h_{\varepsilon_2-\varepsilon_3}$, 
$i(h_1')=h_1+2h_2=h_{\varepsilon_1-\varepsilon_2} + 2h_{\varepsilon_3}$, $\langle\mu,\alpha_1\rangle =0$ and $\langle\mu,\alpha_2\rangle =\lambda$, we have that
the $\goh'$-weight of $v_\lambda$ is $\mu=\lambda(\alpha_1+2\alpha_2)$.

Let $\gon_-$ denote the nilradical opposite to the nilradical of the parabolic subalgebra $\gop$. Then $\gon_-$ is commutative, 
$$
{\lU}(\gon_-)\otimes \mV^\ch\simeq \Pol\left(\frac{\pa}{\pa x_1},
\dots, \frac{\pa}{\pa x_5}\right)\otimes \mC_\lambda 
$$
and the variables $\frac{\pa}{\pa x_1},\dots ,\frac{\pa}{\pa x_5}$
denote the following $so(7)$-root space generators: 
\[
\begin{array}{lll}
\frac{\pa}{\pa x_1} :=g_{-\varepsilon_1+\varepsilon_2}=g_{-1}, &
\frac{\pa}{\pa x_2}:=g_{-\varepsilon_1-\varepsilon_3}= g_{-8}, & 
\frac{\pa}{\pa x_3}:=g_{-\varepsilon_1} = g_{-6} ,
\\
\frac{\pa}{\pa x_5}:=g_{-\varepsilon_1+\varepsilon_3}= g_{-4}, &  \frac{\pa}{\pa x_4}:=g_{-\varepsilon_1-\varepsilon_2}=g_{-9}.
\end{array}
\]
Here, we recall that 
$
[x_i, \frac{\partial }{\partial x_j}]= -[\frac{\partial }{\partial x_j}, x_i]=
\left\{
\begin{array}{cc}
0 &\mathrm{if~}i\neq j\\
-1 &\mathrm{if~}i=j 
\end{array}\right. 
$ is the adjoint action of the differential operator $x_i$ on the differential operator $\frac{\partial }{\partial x_j} $.

By Lemma \ref{leParabolicsG2inParabolicsB3}, the simple part of the Levi factor of $i(\gop')$ is isomorphic to $sl(2)$ and its action on $\gon_-$ can be extended to action on ${\lU}(\gon_-)\simeq S^\star (\gon_-)$. The elements $h:=h_2, e:=g_2, f:=g_{-2}$ give the standard $h,e,f$-basis of $sl(2)$, i.e., $[e,f]=h, [h,e]=2e, [h, f]=-2f$.  Then the action of $h$ on $\gon_-$ is the adjoint action of $ x_{5} \px{5} +x_{4} \px{4} -x_{2} \px{2} -x_{1} \px{1} $, the action of $e$ is the adjoint action of $x_{4} \px{2}-x_{5} \px{1}$
and the action of $f$ is  the adjoint action of $ -x_{1} \px{5}+x_{2} \px{4}$.

We now proceed to generate all $\gol'$-invariant singular vectors in $M^{so(7)}_{\gop_{(1,0,0)}}(\mC_\lambda)$, i.e., the singular vectors that induce $i(\LieGtwo)$-generalized Verma modules induced from character (scalar generalized Verma modules). To do that we need the following lemma from classical invariant theory of reductive Lie algebras. 

\begin{lemma}\label{invariants}
Then the $sl(2)$-invariants of $S^\star(\gon_-)$ are an associative algebra generated by the elements 
\begin{equation}\label{eqLeInvariants}
\begin{array}{rcl}
\displaystyle u_1&\displaystyle :=&\displaystyle \px{1}\px{4}+\px2\px5 =g_{-1}g_{-9}+g_{-8}g_{-4}  \\
\displaystyle u_2&\displaystyle :=&\displaystyle \px3= g_{-6}\quad .
\end{array}
\end{equation}
\end{lemma}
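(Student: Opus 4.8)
The plan is to reduce the statement to the first fundamental theorem of invariant theory for $SL_2$. First I would read off the $sl(2)$-module structure of $\gon_-$ from the explicit operators given just above the lemma. Using $[x_i,\px{j}]=-\delta_{ij}$ one computes the weights of the generators under $h=\ad(x_5\px5+x_4\px4-x_2\px2-x_1\px1)$, namely $\px1,\px2$ have weight $+1$, $\px3$ has weight $0$, and $\px4,\px5$ have weight $-1$; the raising and lowering operators act by $e\cdot\px5=\px1$, $e\cdot\px4=-\px2$, $f\cdot\px1=\px5$, $f\cdot\px2=-\px4$, while $e,f$ annihilate $\px3$. Hence $\gon_-\cong W\oplus W'\oplus\mC\px3$ as an $sl(2)$-module, where $W:=\linspan\{\px1,\px5\}$ and $W':=\linspan\{\px2,\px4\}$ are each a copy of the $2$-dimensional standard (defining) representation $\mC^2$, with highest weight vectors $\px1$ and $\px2$ respectively, and $\mC\px3$ is the trivial representation.

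Since $sl(2)=\operatorname{Lie}\,SL_2(\mC)$ and the action on the finite-dimensional space $\gon_-$ integrates to a rational action of the reductive group $SL_2(\mC)$, the subalgebra of $sl(2)$-invariants of $S^\star(\gon_-)$ coincides with the algebra of $SL_2(\mC)$-invariants. Because $SL_2(\mC)$ acts trivially on $\mC\px3$, I would split off this factor and write $S^\star(\gon_-)^{sl(2)}=\mC[\px3]\otimes S^\star(W\oplus W')^{SL_2(\mC)}$. Now I invoke the first fundamental theorem for $SL_2(\mC)$ acting on copies of its defining representation: the invariants of two vectors in $\mC^2$ are generated by the single bracket, i.e. the $2\times 2$ determinant pairing the two copies. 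In the basis above this bracket is, up to sign, $\px1\px4+\px2\px5=u_1$; indeed, pairing the highest weight vector of one copy with the correctly normalized lowest weight vector of the other gives $\px1\cdot(-\px4)-\px5\cdot\px2=-u_1$, which one checks directly to be annihilated by $e$, $f$ and $h$. Combining the two factors, $S^\star(\gon_-)^{sl(2)}$ is generated by $u_1$ and $u_2=\px3$; translating through $\px1=g_{-1}$, $\px2=g_{-8}$, $\px3=g_{-6}$, $\px4=g_{-9}$, $\px5=g_{-4}$ recovers the displayed formulas.

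The only genuinely non-elementary input is the completeness assertion supplied by the first fundamental theorem; verifying that $u_1$ and $u_2$ are invariant, and that they are in fact algebraically independent (so that the invariant ring is the polynomial ring $\mC[u_1,u_2]$), is routine, the latter following since $u_2=\px3$ is transcendental over $\mC[\px1,\px2,\px4,\px5]\ni u_1$. The place to be careful is the identification of $\gon_-$ as a representation, in particular the relative sign between the two standard copies: one must pair $\px1$ with $\px4$ and $\px2$ with $\px5$, and not $\px1$ with $\px5$, since $\px1\px5$ and $\px2\px4$ lie in the $3$-dimensional subrepresentations $S^2W$, $S^2W'$ and are not invariant. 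If a self-contained argument avoiding the citation is preferred, generation can instead be established by a weight-multiplicity count: the character of $\gon_-$ is $2q+1+2q^{-1}$, and the multiplicity of the trivial $sl(2)$-representation in each symmetric power $S^d(\gon_-)$ matches exactly the number of monomials $u_1^a u_2^b$ of the appropriate degree, which are linearly independent. This degree bookkeeping is the technical heart of the elementary version.
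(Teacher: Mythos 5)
Your argument is correct, and your identification of the $sl(2)$-module structure of $\gon_-$ (the weights, the action of $e$ and $f$, and in particular the sign bookkeeping that forces the pairing of $\px1$ with $\px4$ and $\px2$ with $\px5$ rather than $\px1$ with $\px5$) agrees with what the operators listed before the lemma give. Your main route, though, is genuinely different from the paper's. The paper checks invariance of $u_1,u_2$ by direct computation (adding the same structural remark you make, namely that $u_1$ is the invariant pairing between the two two-dimensional copies, phrased there via the defining symmetric bilinear form of $so(4)\simeq sl(2)\times sl(2)$), and then proves completeness not by citing the first fundamental theorem but by precisely the weight-multiplicity count you relegate to your closing sentence: it expands the rational function $(1-x^{-2})(1-zx)^{-2}(1-zx^{-1})^{-2}(1-z)^{-1}$ to get the multiplicity $b(l,0)$ of the trivial $sl(2)$-module in $S^l(\gon_-)$, finds $b(l,0)=\lfloor l/2\rfloor+1$, and observes that this equals the number of linearly independent monomials $u_1^qu_2^r$ with $2q+r=l$. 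The paper's version is self-contained and certifies generation and freeness in one stroke, at the cost of a generating-function computation; your FFT version is shorter and more conceptual (and since only two copies of $\mC^2$ occur there are no Pl\"ucker relations, so it likewise yields that the invariant ring is the free polynomial algebra $\mC[u_1,u_2]$), but it outsources completeness to a classical theorem plus the standard equivalence of $sl(2)$- and $SL_2(\mC)$-invariants. Either route is a valid proof of the lemma.
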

\begin{proof}
Direct computation shows that $u_1$, $u_2$ are invariants. Alternatively, as the direct sum of two two-dimensional $sl(2)$-modules gives a natural embedding 
$sl(2)\hookrightarrow sl(2)\times sl(2) $, 
we can view $u_1$ as the invariant element induced by the defining 
symmetric bilinear form of $so(4)\simeq sl(2)\times sl(2)$. 
Let the positive root of $sl(2)$ be\footnote{$\eta$ is of course the projection of long $\LieGtwo$-root $ \alpha_2=\pr (\varepsilon_2-\varepsilon_3) $ from the dual of the two-dimensional Cartan subalgebra of $\LieGtwo$ to the dual of the one-dimensional Cartan subalgebra of a long-root $sl(2)$-subalgebra of $\LieGtwo$} $\eta$ , and the multiplicity of the $sl(2)$-module 
with highest weight $t\frac\eta 2$ in $S^l(\gon_-)$ be $b(l,t)$. Denoting by $x,z$ a couple 
of formal variables, we have that 
$\sum_{l\in \mZ_{\geq 0}, t\in \mZ_{\geq 0}} b(l,t)(z^l x^{t}+z^lx^{-1-t})$ is the power 
series expansion of the rational function
\begin{eqnarray*}
(1-x^{-2}) \frac{1}{(1-zx)^2} \frac{1}{(1-zx^{-1})^2}\frac{1}{(1-z)}.
\end{eqnarray*}
Direct computation shows that $b(l,t)$ equals $-1/2t^{2}+1+1/2tl+1/2l+1/2t$  whenever $l+t$ is even and  $ -1/2t^{2}+1/2+1/2tl+1/2l$
whenever $l+t$ is odd, and $l$, $t$ satisfy the inequalities $l\geq t \geq  0$. Finally, 
substituting with $t=0$, we get $b(l,0)=1+l/2$ for even $l$ and $b(l,0)=1/2+l/2$. For a fixed $l$, 
this is exactly the dimension of the vector space generated by the linearly independent invariants 
$u_1^qu_2^r\in S^l(\gon_-)$ with $r+2q=l$, 
which completes the proof of our Lemma.
\end{proof}


From the definition of embedding map $i$ it follows that 
\[
\begin{array}{rcl}
\displaystyle\ad(i(g_2')) &=&\displaystyle -x_2\px4 +x_1\px5,\\
\displaystyle\ad(i(g_{-2}')) &=&\displaystyle -x_4\px2 +x_5\px1,\\
\displaystyle\frac{1}3\ad(i(h_2'))&=& \displaystyle\ad(h_2) = [\ad(i(g_2')), \ad(i(g_{-2}'))]\\&=&
\displaystyle x_{5} \px{5} +x_{4} \px{4} -x_{2} \px{2} -x_{1} \px{1}, \\
\displaystyle \ad (i(h_1')) &=&\displaystyle -x_{5} \px{5} +x_{3} \px{3} +3x_{2} \px{2} +2x_{1} \px{1},
\end{array}
\]
and therefore 
\begin{equation}\label{eqGradingElement}
\ad(i(2h_1'+h_2'))=x_{5} \px{5} +3x_{4} \px{4} +2x_{3} \px{3} +3x_{2} \px{2} +x_{1} \px{1}
\end{equation}
represents the central element of 
the Levi factor $i(\gol')$. Its action therefore naturally induces a grading $\gr$ on the Weyl algebra of $\gon_-$ in the variables 
$$
\left\{x_1,x_2,x_3,x_4,x_5,\frac{\pa}{\pa x_1},\frac{\pa}{\pa x_2},\frac{\pa}{\pa x_3},
\frac{\pa}{\pa x_4},\frac{\pa}{\pa x_5}\right\},
$$
via 
\begin{equation}\label{grading}
\begin{array}{ll}
 -\gr\left(x_1\right)=\gr\left(\px1\right)=-1,\, & -\gr\left(x_2\right)=\gr\left(\px2\right)=-3,\\
-\gr(x_3)=\gr\left(\px3\right)=-2,\, & -\gr(x_4)=\gr\left(\px4\right)=-3,\\
-\gr\left(x_5\right)=\gr\left(\px5\right)=-1\quad .
\end{array}
\end{equation}
In particular, we get that the invariants $u_1=\frac{\pa}{\pa x_1}\frac{\pa}{\pa x_4}+\frac{\pa}{\pa x_2}\frac{\pa}{\pa x_5}$ and 
$u_2=\left(\frac{\pa}{\pa x_3}\right)^2$ are homogeneous with respect to the $\gr$-grading.

Let  $\xi_1, \dots \xi_5$ be formal variables, Fourier-dual with respect to $x_1, \dots, x_n$. Let 
$$ 
\pa_1:=\frac{\pa}{\pa \xi_1},\dots, \pa_5:=\frac{\pa}{\pa \xi_5},
$$
denote the derivatives in the $\xi_i$-variables. We recall that the distributive Fourier transform ${\fam2 F}$ maps the Weyl algebra generated by $x_1,\dots, x_n,\px1,\dots, \px5$ to the Weyl algebra generated by $\pa_1,\dots, \pa_5,\xi_1,\dots, \xi_5$ via 
\[
{\fam2 F}(x_i):=\pa_i\quad \quad {\fam2 F}\left(\px{i}\right):=\xi_i\quad .
\]
As the Fourier transform is a Lie algebra homomorphism,
by Lemma \ref{invariants} the subalgebra
of $\gol'_s=sl(2)$-invariants with respect to the Fourier dual representation
is the polynomial ring $Pol[\xi_1\xi_4+\xi_2\xi_5,\xi_3]$. 
\begin{theorem}\label{thMain}
Let $v_\lambda$ be the highest weight vector of the $so(7)$-generalized Verma module 
$M^{so(7)}_{\gop_{(1,0,0)}}(\mC_\lambda)$ induced from character $\chi_\lambda$, $\lambda\in\mC$. 
Let $N\in\mN$ be a positive integer and $A_i\in\mC$, $i\in\mN$ a collection of
complex numbers such that at least one of them is non-zero. Let 
\begin{equation}\label{eqTheSingularVector}
u\cdot v_\lambda:= \sum_{k=0}^N A_k u_1^{k}u_2^{N-k} \cdot v_\lambda \quad ,
\end{equation}
where $u_1, u_2$ are given by \eqref{eqLeInvariants}.
\begin{enumerate}
\item
A vector $u\cdot v_\lambda$ is 
$i(\LieGtwo)\cap \gop$-singular (``singular vector of scalar type'') of homogeneity $2N$ if and only if $\lambda=N-5/2$ and $u=\left( 2u_1 +u_2 \right)^N=\left( 2u_1 +u_2 \right)^{\lambda+5/2}$.
\item
$M^{so(7)}_{\gop_{(1,0,0)}}(\mC_\lambda)$ has no  $i(\LieGtwo)\cap \gop$-singular vector of homogeneity $2N+1$.
\item A vector $v\in M^{so(7)}_{\gop_{(1,0,0)}} (\mC_\lambda)$, not proportional to $v_\lambda$, is $so(7)\cap\gop$-singular if and only if $\lambda=N-5/2$ and $v=u\cdot v_\lambda$ is the vector given in 1.
\end{enumerate}
\end{theorem}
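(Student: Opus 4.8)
The plan is to carry out the F-method of Section~3 in the explicit coordinates fixed above, reducing the problem to two decoupled equations. By the isomorphism \eqref{eqn:phi}, a scalar-type $i(\LieGtwo)\cap\gop$-singular vector is the same datum as a vector $u\cdot v_\lambda\in M^{so(7)}_{\gop_{(1,0,0)}}(\mC_\lambda)$ that is (i) invariant under the semisimple Levi part $\gol'_s=sl(2)=\langle g_2,g_{-2},h_2\rangle$ and (ii) annihilated by the nilradical $\gon'$ of $\gop'_{(1,0)}$. Since $\gon'$ is generated over its Levi by the root vector of the single crossed simple root $\alpha_1$, and since a $\gol'_s$-invariant vector of definite central character that is killed by that generator is automatically killed by all of $\gon'$, condition (ii) collapses to the single equation $d\pi^\ch\big(i(g'_1)\big)(u\cdot v_\lambda)=0$ with $i(g'_1)=g_1+g_3$. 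By Lemma \ref{invariants}, condition (i) says exactly that $u\in\mC[u_1,u_2]$; for fixed homogeneity I therefore write $u=\sum_k A_k\,u_1^k u_2^{N-k}$ as in \eqref{eqTheSingularVector}, using the degree-two generator $u_2=(g_{-6})^2=(\partial/\partial x_3)^2$ (as in the paragraph preceding Theorem \ref{thMain}) so that every term has homogeneity $2N$, the odd homogeneities carrying one additional factor $g_{-6}$.

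The decisive simplification is that $u_1$ and $u_2$ carry the same $\goh$-weight $-2\varepsilon_1$, so each homogeneous $u\cdot v_\lambda$ is a single $\goh$-weight vector, of weight $(\lambda-m)\varepsilon_1$ when its homogeneity is $m$. Hence $g_1\cdot(u\cdot v_\lambda)$ and $g_3\cdot(u\cdot v_\lambda)$ lie in the distinct weight spaces of weights $(\lambda-m+1)\varepsilon_1-\varepsilon_2$ and $(\lambda-m)\varepsilon_1+\varepsilon_3$, so $(g_1+g_3)(u\cdot v_\lambda)=0$ splits into the two independent equations $g_1\cdot(u\cdot v_\lambda)=0$ and $g_3\cdot(u\cdot v_\lambda)=0$. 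I will solve the $g_3$-equation first, which fixes the combination $u$ with no constraint on $\lambda$, and then the $g_1$-equation, which fixes $\lambda$.

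Because $g_3\in\gol$ and $\ad(g_3)$ preserves $\gon_-$ --- explicitly it sends $g_{-8}\mapsto g_{-6}\mapsto g_{-4}$ up to nonzero constants and annihilates $g_{-1},g_{-9}$ --- one has $g_3\cdot(u\cdot v_\lambda)=(\ad(g_3)u)\cdot v_\lambda$, and the equation becomes the purely combinatorial, $\lambda$-independent condition $\ad(g_3)u=0$ in $S^\bullet(\gon_-)$. Since $\ad(g_3)u_1$ and $\ad(g_3)u_2$ are both proportional to $g_{-6}g_{-4}$, the derivation $\ad(g_3)$ turns $\sum_kA_ku_1^ku_2^{N-k}$ into a two-term recursion for the $A_k$, whose unique solution up to scale is the binomial combination $u=(2u_1+u_2)^N$ (and an analogous unique combination in each odd homogeneity). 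The $g_1$-equation is where $\lambda$ enters: $\ad(g_1)$ sends $g_{-1}\mapsto h_1$ and carries the other generators of $\gon_-$ into root spaces of $\gol_s=so(5)$. Reducing $[g_1,u]\,v_\lambda$ to Poincar\'e--Birkhoff--Witt normal form and using $h_1 v_\lambda=\lambda v_\lambda$ together with $\gol_s\cdot v_\lambda=0$ (the inducing module is a character), the condition reduces to requiring a polynomial affine in $\lambda$ to vanish; most cleanly, evaluate the Fourier-transformed operator $(d\pi^\ch)^F(g_1)$ on $(2u_1+u_2)^N$ and require the result to vanish identically. This holds exactly for $\lambda=N-5/2$ in the even case and for no $\lambda$ in the odd case, which is parts 1 and 2.

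For part 3, note that the two equations just solved read $g_1\cdot v=g_3\cdot v=0$ for $v=(2u_1+u_2)^N v_\lambda$; adjoining $g_2\cdot v=0$ (valid by $sl(2)$-invariance) shows that $v$ is killed by the positive simple root vectors $g_2=g_{\eta_2}$ and $g_3=g_{\eta_3}$ of $\gol_s=so(5)$. As the weight $(\lambda-2N)\varepsilon_1$ of $v$ projects to $0$ on $\gol_s$ and $v$ lies in the finite-dimensional $\gol_s$-module $S^{2N}(\gon_-)\cdot v_\lambda$, it is a $\gol_s$-highest weight vector of weight $0$, hence spans a trivial $\gol_s$-submodule and is annihilated by all of $\gol_s$; together with annihilation by the crossed simple root vector $g_1$ this forces, by the generation argument of the first paragraph, annihilation by all of $\gon$, so $v$ is $\gop_{(1,0,0)}$-singular, i.e.\ $so(7)\cap\gop$-singular. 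The converse is immediate: $\gon'\subset\gon$ and $\gol'_s\subset\gol_s$ make every $so(7)\cap\gop$-singular vector an $i(\LieGtwo)\cap\gop$-singular vector of scalar type, hence by parts 1--2 one of the vectors of part 1. This yields the equivalence and explains the ``automatic'' $\gop_{(1,0,0)}$-singularity. The main obstacle throughout is the $\lambda$-dependent $g_1$-reduction: carrying the $\gol_s$-valued commutators through the surviving $\gon_-$ factors and extracting the scalar coefficient of $\lambda$ (equivalently, computing $(d\pi^\ch)^F(g_1)$) is the one laborious step, while the rest is weight bookkeeping and an elementary recursion.
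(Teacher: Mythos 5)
Your proposal is correct in substance and follows the same overall strategy as the paper --- the F-method reduction, Lemma \ref{invariants} to force $u\in\mC[u_1,u_2]$, and the reduction of the $\gon'$-annihilation condition to the single equation for $i(g_1')=g_1+g_3$ --- but it reorganizes the decisive computation in a genuinely different and cleaner way. The paper computes the full Fourier-transformed operator $P(\lambda)=(d\tpi^\ch)^F(i(g_1'))$, applies it to $\sum_kA_ku_1^ku_2^{N-k}$, and only at the end separates the output into the linearly independent families $\xi_4\,u_1^{s-1}u_2^{N-s}$ and $\xi_3\xi_5\,u_1^{s-1}u_2^{N-s}$ to extract two recursions. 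Your observation that $u_1,u_2$ both have $\goh$-weight $-2\varepsilon_1$, so that $g_1\cdot(uv_\lambda)$ and $g_3\cdot(uv_\lambda)$ land in distinct weight spaces, performs this separation conceptually from the start: the paper's $\xi_3\xi_5$-family is exactly the $\lambda$-free $g_3$-equation (solvable by $\ad(g_3)$ inside $S^\bullet(\gon_-)$, since $g_3\in\gol$ kills $v_\lambda$), and the $\xi_4$-family is the $g_1$-equation that pins down $\lambda=N-5/2$. This buys a cleaner logical structure ($u$ is determined before $\lambda$ enters) at the cost of no new machinery. Your part 3 is also genuinely different from the paper's: the paper identifies $4u_1+u_2$ with the Fourier transform of the $\gol_s\simeq so(5)$-invariant Laplacian and invokes the binomial formula, whereas you deduce $\gop_{(1,0,0)}$-singularity directly from annihilation by all three simple root vectors $g_1,g_2,g_3$ together with the fact that a highest weight vector of weight $0$ in a finite-dimensional completely reducible $\gol_s$-module generates the trivial module; this is more self-contained and makes the ``automatic'' $so(7)$-singularity transparent.

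Two small inaccuracies, neither fatal. First, in odd homogeneity the $g_3$-equation does \emph{not} have a unique nonzero solution: writing the candidate as $\xi_3\sum_kA_ku_1^ku_2^{N-k}$, the term of $\ad(g_3)$ that differentiates the lone factor $\xi_3$ produces an uncancellable top term proportional to $A_N\xi_5u_1^N$ (this is precisely the paper's $2\xi_5\partial_3$ argument), so the $g_3$-recursion is overdetermined and forces all $A_k=0$ already --- part 2 never reaches the $g_1$-equation, contrary to your description. Second, parts 1 and 2 classify only \emph{homogeneous} singular vectors; to conclude in part 3 that an arbitrary $so(7)\cap\gop$-singular vector is one of these, you need the paper's remark that the grading element \eqref{eqGradingElement} of $i(\gol')$ preserves the space of singular vectors, so that space splits into $\gr$-homogeneous components. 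Both points are easily repaired and do not affect the validity of the approach.
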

\noindent
\begin{proof}
1. By Lemma \ref{invariants} and Section \ref{secso7G2LieAlgstructure} a $\gop'$-singular vector must be polynomial in $u_1$ and $u_2$ and therefore a homogeneous $\gop'$-singular vector of homogeneity $2N$ must be of the form \eqref{eqTheSingularVector}.

First  we determine the action of the second simple positive root $g_2$
in the Fourier dual representation $d\tilde{\pi}(\ad(i(g_1')))$, acting on $Pol[\xi_1,\dots ,\xi_5]$.

Let $n_i$ be non-negative integers. Then 
\begin{eqnarray}
& & i(g_1')\cdot ( \xi_1^{n_1}\xi_2^{n_2}\xi_3^{n_3}\xi_4^{n_4}\xi_5^{n_5} \cdot v_\lambda)= 
\nonumber \\ \nonumber
& &
\left( (-n_1^{2}+n_1)\xi_1^{n_1-1}\xi_2^{n_2}\xi_3^{n_3}\xi_4^{n_4}\xi_5^{n_5}
-n_2 \xi_1^{n_1}\xi_2^{n_2-1}\xi_3^{n_3+1}\xi_4^{n_4}\xi_5^{n_5}
\right.
\\ \nonumber
& & +n_1 \lambda\xi_1^{n_1-1}\xi_2^{n_2}\xi_3^{n_3}\xi_4^{n_4}\xi_5^{n_5}
+(n_3^{2}-n_3) \xi_1^{n_1}\xi_2^{n_2}\xi_3^{n_3-2}\xi_4^{n_4+1}\xi_5^{n_5}    
\\ \nonumber
& & +2n_3   \xi_1^{n_1}\xi_2^{n_2}\xi_3^{n_3-1}\xi_4^{n_4}\xi_5^{n_5+1}    
-n_1n_5  \xi_1^{n_1-1}\xi_2^{n_2}\xi_3^{n_3}\xi_4^{n_4}\xi_5^{n_5}           
\\ \nonumber 
& &
+n_2n_5  \xi_1^{n_1}\xi_2^{n_2-1}\xi_3^{n_3}\xi_4^{n_4+1}\xi_5^{n_5-1} 
-n_1n_2  \xi_1^{n_1-1}\xi_2^{n_2}\xi_3^{n_3}\xi_4^{n_4}\xi_5^{n_5}    
\\ \nonumber 
& &
\left. -n_1n_3  \xi_1^{n_1-1}\xi_2^{n_2}\xi_3^{n_3}\xi_4^{n_4}\xi_5^{n_5}   
\right)\cdot v_\lambda
\\ \nonumber 
& & = 
(-\xi_1\partial_1^2 -\xi_3\partial_2+\lambda\partial_1+\xi_4\partial_3^2  
+2\xi_5\partial_3 -\xi_5\partial_1\partial_5+\xi_4\partial_2\partial_5 
\\
& & -\xi_2\partial_1\partial_2 -\xi_3\partial_1\partial_3)
\cdot ( \xi_1^{n_1}\xi_2^{n_2}\xi_3^{n_3}\xi_4^{n_4}\xi_5^{n_5} )\cdot v_\lambda,
\end{eqnarray}
Let $P(\lambda)$ denote the differential operator on 
$\mathbb C[\xi_1,\xi_2, \xi_3, \xi_4, \xi_5]$ obtained 
in the following computation:
\begin{equation}\label{eqad_i(g_2)}
\begin{array}{rcl}
& &(-\xi_1\partial_1^2 -\xi_3\partial_2+\lambda\partial_1+\xi_4\partial_3^2  +2\xi_5\partial_3\\
&&
-\xi_5\partial_1\partial_5+\xi_4\partial_2\partial_5 
\nonumber  -\xi_2\partial_1\partial_2 -\xi_3\partial_1\partial_3)\\
&=&
(-\xi_3\partial_2+\xi_4\partial_3^2  +2\xi_5\partial_3 +(-\xi_5\partial_1+\xi_4\partial_2)\partial_5 
\\&&
\nonumber  -(\xi_1\partial_1+\xi_2\partial_2 +\xi_3\partial_3-\lambda)\partial_1)\\
&=&
(-\xi_3\partial_2+\xi_4\partial_3^2  +2\xi_5\partial_3 
\nonumber +\partial_5 (-\xi_5\partial_1+\xi_4\partial_2) \\&&
-(\xi_1\partial_1+\xi_2\partial_2 +\xi_3\partial_3-\lambda -1)\partial_1).
\end{array}
\end{equation}
We compute
\begin{eqnarray*}
\partial_1\cdot (u_1^{b_1}u_2^{b_2})&=&b_1\xi_4 u_{1}^{b_1-1} u_2^{b_2}, \\
\partial_2\cdot (u_1^{b_1}u_2^{b_2})&=&b_1\xi_5 u_{1}^{b_1-1} u_2^{b_2}, \\
(\xi_1\partial_1+\xi_2\partial_2 )\cdot (u_1^{b_1}u_2^{b_2})&=&b_1u_{1}^{b_1} u_2^{b_2}, \\
\partial_3\cdot (u_1^{b_1}u_2^{b_2})&=&2b_2\xi_3u_{1}^{b_1} u_2^{b_2-1}, \\
\partial_3^2\cdot (u_1^{b_1}u_2^{b_2})&=&2b_2(2b_2-1) u_{1}^{b_1} u_2^{b_2-1},
\end{eqnarray*}
and so
\begin{eqnarray}\notag
&&(-\xi_3\partial_2+\xi_4\partial_3^2  +2\xi_5\partial_3 +\partial_5 (-\xi_5\partial_1+\xi_4\partial_2)- \notag\\
& & (\xi_1\partial_1+\xi_2\partial_2 +\xi_3\partial_3-\lambda-1)\partial_1) \cdot (u_1^{b_1}u_2^{b_2})\notag\\ 
&=&\notag
(-\xi_3\partial_2+\xi_4\partial_3^2  +2\xi_5\partial_3  
-(\xi_1\partial_1+\xi_2\partial_2 +\xi_3\partial_3-\lambda-1)\partial_1) \cdot (u_1^{b_1}u_2^{b_2})\\&=&\notag
 -b_1\xi_3\xi_5 u_{1}^{b_1-1} u_2^{b_2}+2b_2(2b_2-1)\xi_4 u_{1}^{b_1} u_2^{b_2-1} 
 +4b_2\xi_5 \xi_3u_{1}^{b_1} u_2^{b_2-1} \\&&\notag
 -(\xi_1\partial_1+\xi_2\partial_2 +\xi_3\partial_3-\lambda-1)\cdot( b_1\xi_4 u_{1}^{b_1-1} u_2^{b_2})\\&=&\notag
-b_1\xi_3\xi_5 u_{1}^{b_1-1} u_2^{b_2}+2b_2(2b_2-1)\xi_4 u_{1}^{b_1} u_2^{b_2-1} +4b_2\xi_5 \xi_3u_{1}^{b_1} u_2^{b_2-1}  \\
&&\notag
+ (-b_1+1+\lambda +1-2b_2) b_1\xi_4 u_{1}^{b_1-1} u_2^{b_2} \\&=&\label{eqxiActsOnu_1u_2}
2b_2 (  (2b_2-1)\xi_4 +2\xi_5\xi_3)u_{1}^{b_1} u_2^{b_2-1} \notag\\
& & + b_1((-b_1-2b_2+\lambda +2) \xi_4 -\xi_3\xi_5)u_{1}^{b_1-1} u_2^{b_2} \quad .
\end{eqnarray} 
The operator $P(\lambda)$ is homogeneous with respect to the grading in (\ref{grading}), 
and its application to a homogeneous polynomial in $u_1= u_1(\xi_1,\dots ,\xi_5)$, 
$u_2= u_2(\xi_1,\dots ,\xi_5)$ yields
\begin{equation}\label{eqXiActsOnInvariantSumEven}
\begin{array}{rcl}
& & P(\lambda)( \sum_{k=0}^N A_k u_1^{k}u_2^{N-k} )
\nonumber \\
&=& \sum_{k=0}^N A_k( 2(N-k) (  (2(N-k)-1)\xi_4 +2\xi_5\xi_3)u_{1}^{k} u_2^{N-k-1} 
\\&&
+ k((-k-2(N-k)+\lambda +2) \xi_4 -\xi_3\xi_5)u_{1}^{k-1} u_2^{N-k} )\\
&=& \sum_{s=1}^{N+1} 2 A_{s-1} (N-(s-1)) (  
(2(N-(s-1))-1)\xi_4 
\\
& & +2\xi_5\xi_3)u_{1}^{(s-1)} u_2^{N-(s-1)-1} 
\\&&
+\sum_{k=0}^N  kA_k((-k-2(N-k)+\lambda+2) \xi_4 -\xi_3\xi_5)u_{1}^{k-1} u_2^{N-k}\\
&=& \sum_{s=1}^{N} 
(2A_{s-1}(N-s+1)
( (2N-2s+1)\xi_4 +2\xi_5\xi_3)
\\&&
\phantom{\sum_{s=1}^{N} ( }
+sA_s((s-2N+\lambda+2) \xi_4 -\xi_3\xi_5))u_{1}^{s-1} u_2^{N-s}
).
\end{array}
\end{equation}
The $2N$ summands of the form $\xi_4u_{1}^{s-1}u_2^{N-s}$ and $\xi_3\xi_5u_{1}^{s-1}u_2^{N-s}$ 
are linearly independent and therefore the above sum is zero if and only if 
\begin{eqnarray}\label{eqCoeffDensityBranching}
& & 2A_{s-1}(N-s+1)( (2N-2s+1)\xi_4 +2\xi_5\xi_3)
\nonumber \\
& & +sA_s((s-2N+\lambda+2) \xi_4 -\xi_3\xi_5))
\end{eqnarray}
equals zero for all values of $s$. When $s=N$, the above sum becomes 
\[
2A_{N-1}(\xi_4+2\xi_3\xi_5) + NA_N((-N+\lambda +2)\xi_4-\xi_3\xi_5)\quad. 
\]
It is a straightforward check that if $A_N$ vanishes, then $A_{N-1}, A_{N-2},\dots$ must also vanish; therefore we may assume $A_N\neq 0$.
The vanishing of the coefficient in front of $\xi_4$ implies $A_{N-1}=-\frac 12 N A_N\left(- N+\lambda+2 \right)$ and in turn, the vanishing of the coefficient in front of $\xi_3\xi_5$ implies $-5 +2 N -2 \lambda=0$. Therefore
\[
\lambda=N-5/2\quad .
\]
Substituting $\lambda$ back into (\ref{eqCoeffDensityBranching}), we get 
\begin{eqnarray*}
& & 2A_{s-1}(N-s+1)( (2N-2s+1)\xi_4 +2\xi_5\xi_3)
\nonumber \\
& & +sA_s((-N+s-1/2) \xi_4 -\xi_3\xi_5)=0.
\end{eqnarray*}
This implies $A_s=\frac{4(N-s+1)}{s}A_{s-1}=\dots ={4^s}{\binom{N}{s}} A_0$, which completes the proof of 1). 

2. 
A homogeneous $i(\LieGtwo)\cap \gop$-singular vector is, in particular, $sl(2)\simeq i([\gol',\gol'])$-singular and by Lemma \ref{invariants} must be of the form $u= \xi_3\sum_{k=0}^N A_k u_1^{k}u_2^{N-k}$. The application 
of $2\xi_5\partial_3$ converts $A_N(\xi_1\xi_4+\xi_2\xi_5)^N\xi_3$
into $2A_N(\xi_1\xi_4+\xi_2\xi_5)^N\xi_5$. Furthermore $2A_N(\xi_1\xi_4+\xi_2\xi_5)^N\xi_5$ contains in 
its binomial expansion $2A_N(\xi_1\xi_4)^N\xi_5$. Direct check shows that the action of $P(\lambda)$ on 
$(\xi_1\xi_4+\xi_2\xi_5)^{N-i}\xi_3^{1+2i}$ for $i>0$ does not
contain the monomial $(\xi_1\xi_4)^N\xi_5$. This implies that
$A_N=0$ and by induction, the polynomial is trivial. Consequently,
there is no nontrivial odd homogeneity polynomial solving the
differential equation $P(\lambda)$.

As an illustration, for $N=0$ we have $P(\lambda)(A_0\xi_3)=2A_0\xi_5$.
This vanishes provided $A_0=0$, which implies the polynomial
is trivial.  

3. An $so(7)\cap \gop$-singular vector must be $i(\LieGtwo)\cap \gop$-singular. As the grading element from \eqref{eqGradingElement} maps $i(\LieGtwo)\cap \gop$-singular to $i(\LieGtwo)\cap \gop$-singular vectors, it quickly follows that an $i(\LieGtwo)\cap \gop$-singular vector is a linear combination of $\gr$-homogeneous elements (see \eqref{grading}). From the explicit form of $u_1$ and $u_2$ it immediately follows that a homogeneous 
$i(\LieGtwo)\cap \gop$-singular vector is of the form \eqref{eqTheSingularVector}.

From 1) we know that, other than $v_\lambda$, there is at most one more homogeneous  $i(\LieGtwo)\cap \gop$-singular vector, and thus the vector \eqref{eqTheSingularVector} is the only candidate for a $so(7)\cap\gop$-singular vector.  The simple part of $\gol$ is isomorphic to $so(5)$ and induces the quadratic form  with matrix in the coordinates $\xi_1,\dots ,\xi_5$ \begin{eqnarray}
&& Q=
\left(
\begin{array}{ccccc}
0 & 0 & 0 & 2 & 0 \\
0 & 0 & 0 & 0 & 2 \\ 
0 & 0 & 1 & 0 & 0 \\
2 & 0 & 0 & 0 & 0 \\
0 & 2 & 0 & 0 & 0
\end{array}
\right)\,  ,
\nonumber
\end{eqnarray}
i.e., the metric of the form 
$$
g(\xi_1,\xi_2,\xi_3,\xi_4,\xi_5)=(d\xi_3)^2+2(d\xi_1\otimes d\xi_4+d\xi_4\otimes d\xi_1)
+2(d\xi_2\otimes d\xi_5+d\xi_5\otimes d\xi_2).
$$
The Fourier transform of the $so(5)$-invariant Laplace operator associated to $Q$ is  
$$
{\fam2 F}(\triangle_\xi)=Q(\xi_1,\xi_2,\xi_3,\xi_4,\xi_5)=4(\xi_1\xi_4+\xi_2\xi_5)+\xi^2_3.
$$
Relying on $\triangle_\xi$ and the binomial formula for $(4(\xi_1\xi_4+\xi_2\xi_5)+\xi_3^2)^s$, we see 
that the $\LieGtwo\cap\gop$-singular vector constructed 1) is indeed 
$so(7)\cap \gop$-singular. The proof is complete.
\end{proof}

\noindent \textbf{Remark. } As noted in the proof of 3) every $i(\LieGtwo)\cap \gop$-singular is a linear combination of homogeneous $i(\LieGtwo)\cap \gop$-singular vectors, and therefore Theorem \ref{thMain}, 1) and 2) give all $i(\LieGtwo)\cap \gop$-singular vectors (namely, the linear combinations of $v_\lambda$ and the vector given by \eqref{eqTheSingularVector}).

We note that an alternative proof of Theorem \ref{thMain}, 3) can be given as follows. From a well known example (see e.g., \cite{EG}, \cite{KOSS1}, \cite{KOSS2})  of singular vectors in conformal geometry of dimension $5$ describing
conformally invariant powers of the Laplace operator, we know that for $\lambda \in \{-3/2, -1/2, 1/2,\dots \}$ there exists one $so(7)\cap \gop$-singular vector in $M^{so(7)}_{\gop_{(1,0,0)}}(\mC_\lambda)$. On the other hand points 1) and 2) of Theorem \ref{thMain} present us with only one such candidate, so that candidate must be the $so(7)\cap \gop$-singular vector in question.

For $\lambda \in \{-3/2$, $ -1/2, 1/2,\dots \}$, the $\goh$-weight of the  $so(7)\cap\gop$-singular vector in $M^{so(7)}_{\gop_{(1,0,0)}}( \mC_\lambda)$ given by Theorem \ref{thMain} equals $(\lambda- 2N ) \varepsilon_1=(\lambda-2(\lambda+5/2) ) \varepsilon_1=(-\lambda-5)\varepsilon_1$. Therefore the vector from Theorem \ref{thMain} corresponds to the homomorphism of generalized Verma modules 
\begin{equation}\label{eqHmmso(7)}
M^{so(7)}_{\gop_{(1,0,0)}}( \mC_{-\lambda-5}) \hookrightarrow  M^{so(7)}_{\gop_{(1,0,0)}}( \mC_\lambda) \quad .
\end{equation}
In an analogous fashion we conclude that Theorem \ref{thMain} gives a homomorphism of generalized Verma modules
\begin{equation}\label{eqHmmG2}
 M_{\gop'_{(1,0)}}^{\LieGtwo}(\mathbb C_{(-\lambda-5)\psi_1})  \hookrightarrow M_{\gop'_{(1,0)}}^{\LieGtwo}(\mathbb C_{\lambda \psi_1} ) \quad .
\end{equation} 

We note that the existence of the above homomorphisms was proved in \cite{Mat}. We conclude this article by the following.

\begin{proposition}\label{propHMMstandard}
The homomorphisms \eqref{eqHmmso(7)}, \eqref{eqHmmG2} are standard.
\end{proposition}
\begin{proof}
\cite[Chapter 7]{Dixmier} implies that a (non-generalized) Verma module $M_\gob^{\gog}(\mathbb C_\mu)$ lies in a (non-generalized) Verma module $M_\gob^{\gog}(\mathbb C_\nu)$ if and only if there exists a sequence of roots $\alpha_1,\dots, \alpha_k$ such that $s_{\alpha_k}\dots s_{\alpha_1} (\mu+\rho)-\rho =\nu$ and $s_{\alpha_{j+1}} \dots s_{\alpha_{1 }} (\mu+ \rho)- s_{\alpha_j} \dots  s_{\alpha_1}(\mu+ \rho) $ is a positive integer multiple of $\alpha_{j+1}$ for all $j$. Here, $s_{\alpha_i}$ denotes reflection in the root $\alpha_i$ and $\rho$ is the half-sum of the positive roots.

Computation using the above criterion shows that, for $\lambda=-3/2, -1/2, 1/2, \dots$, we have that
\begin{equation}\label{eqStandardHMM1}
M_{\gob}^{so(7)}(\mathbb C_{-\lambda-5})\subset M_{\gob}^{so(7)}(\mathbb C_{\lambda}) 
\end{equation}
and
\begin{equation}\label{eqStandardHMM2}
M_{\gob'}^{\LieGtwo}(\mathbb C_{(-\lambda-5)\psi_1})\subset M_{\gob'}^{\LieGtwo}(\mathbb C_{\lambda\psi_1}) \quad .
\end{equation}
Computation furthermore shows that 
\begin{equation}\label{eqStandardHMM3}
M_{\gob}^{so(7)}(\mathbb C_{-\lambda-5})\nsubseteq M_{\gob}^{so(7)}(\mathbb C_{\mu}) 
\end{equation}
for any $\mu\neq \lambda$ of the form $w(\lambda+\rho)-\rho$, where $w$ is in the Weyl group of $so(7)$ and $\rho$ is the half-sum of the positive roots of $so(7)$. Similarly,
\begin{equation}\label{eqStandardHMM4}
M_{\gob'}^{\LieGtwo}(\mathbb C_{-\lambda-5})\nsubseteq M_{\gob'}^{\LieGtwo}(\mathbb C_{\mu}) 
\end{equation}
for any $\mu\neq \lambda\psi_1$ of the form $w(\lambda\psi_1+ \rho') - \rho'$, where $w$ is in the Weyl group of $\LieGtwo$ and $\rho'$ is the half-sum of the positive roots of $\LieGtwo$.

\eqref{eqStandardHMM1}, \eqref{eqStandardHMM2}, \eqref{eqStandardHMM3}, \eqref{eqStandardHMM4}, together with \cite[Proposition 3.3]{Lepowski:GeneralizationBGG} now imply that the standard homomorphism maps from $M_{\gop(1,0,0)}^{so(7)}(\mathbb C_{-\lambda-5})$ to $M_{\gop(1,0,0)}^{so(7)}(\mathbb C_{\lambda})$ and from $M_{\gop'(1,0)}^{\LieGtwo}(\mathbb C_{(-\lambda-5)\psi_1})$ to $M_{\gop'(1,0)}^{\LieGtwo}(\mathbb C_{\lambda\psi_1})$ are non-zero. On the other hand, our main Theorem \ref{thMain} shows that there is a unique $\gob'$-singular vector of weight $(-5-\lambda)\psi_1 $ in  $M_{\gop'(1,0)}^{\LieGtwo}(\mathbb C_{\lambda\psi_1})$ and therefore also a unique $\gob$-singular vector of weight $(-5-\lambda) \varepsilon_1$ in $M_{\gop(1,0,0)}^{so(7)}(\mathbb C_{\lambda})$. Therefore the homomorphisms \eqref{eqHmmG2}, \eqref{eqHmmso(7)} are standard.
\end{proof}

\flushleft{{\em Acknowledgment.} 
The authors gratefully acknowledge the support by the 
Czech Grant Agency through the grant GA CR P 201/12/G028.

We would also like to thank Toshihisa Kubo for discovering an error and suggesting the correction to an earlier version of Proposition \ref{propHMMstandard}.
}


\vspace{0.3cm}

Petr Somberg

Mathematical Institute of Charles University,

Sokolovsk\'a 83, Praha 8 - Karl\'{\i}n, Czech Republic, 

E-mail: somberg@karlin.mff.cuni.cz.

\vspace{0.3cm}

Todor Milev

Department of Mathematics, University of Massachusetts Boston

100 William T. Morrissey Boulevard Boston, MA 02125, USA

E-mail: todor.milev@gmail.com


\end{document}